\newcommand{\R}{\mathbb{R}}		
\newcommand{\Z}{\mathbb{Z}}						
\newcommand{\N}{\mathbb{N}}					
\newcommand{\Ti}{\mathcal{T}}                                       
\newtheorem{theo}{Theorem}
\newtheorem{rem}{Remark}
\title[Heat Kernel Bounds for ...]{Heat Kernel Bounds for the Laplacian on Metric Graphs of Polygonal Tilings}
\author{Ren\'e Pr\"opper}
\keywords{heat kernels, metric graphs, tilings}
\subjclass[2010]{35K08}
\address{Institut f\"ur Analysis, Universit\"at Ulm, Helmholtzstrasse 18, 89081 Ulm, Germany}
\email{rene.proepper@uni-ulm.de}
\begin{document} 

\begin{abstract}
We obtain an upper heat kernel bound for the Laplacian on metric graphs arising as one skeletons of certain polygonal tilings of the plane, which reflects the one dimensional as well as the two dimensional nature of these graphs.

\end{abstract}
 \maketitle 

\section{Introduction}
If one looks at the heat semigroup on the metric graph with Kirchhoff type vertex conditions given by the uniform grid in $\R^2$, i.e. the vertex set is $\Z^2$ and the edges are all straight-line segments in $\R^2$ between vertices $(n,m)$ and $(n',m') \in \Z^2$ with $\|(n-n',m-m')\|_2=1$, then it is natural to guess that the semigroup behaves for small times like the one dimensional heat semigroup and for large times more like the two dimensional, i.e. the semigroup is governed by one and two dimensional Gaussian estimates. That this is indeed correct was shown in \cite{Pang}. The behaviour of the heat semigroup on compact metric graphs with more general vertex conditions was treated in \cite{Mu}.  \\
One would expect that the same result as for the uniform grid, which may be regarded as the boundary complex of the regular, edge-to-edge tiling of the plane with identical squares, is also true for the tiling with equilateral triangles or with regular hexagons or even more general tilings. Therefore, we have simplified the proof in \cite{Pang} in some places and have thus enlarged the scope of the method a bit. \\
The conditions we impose on the tilings are still rather restictive, but the tilings encompassed by our approach include some nice exemplars like triangulations of the plane, the 11 archimedean tilings, especially the three regular ones, and every other tiling composed from regular polygons of uniformly bounded area (from above and below); see \cite{GS} for an abundance of examples.\\
We want to mention that there is another well known approach (see e.g. \cite{Sal}) to getting upper as well as lower Gaussian estimates; this is via the volume doubling property and a global Poincar\'e inequality. In \cite{HLPP} this approach is shown to be feasible for large classes of metric graphs  through results in \cite{Haes} and especially in \cite{BBK}. The method we use in this paper is less generally applicable, but is in comparison more direct and seems to have some interest in its own right.

Now, we give a precise account of the setting we deal with.\\
Let $\Ti=(P_n)_{n \in \N}$ be a tiling of the (euclidean) plane by convex, compact polygons $P_n \subset \R^2$, $ n \in \N$, i.e. $\R^2  = \cup_{n \in \N} P_n$ and $P_n \cap P_m \subset \partial P_n$ for all $n,m \in \N$ with $n \neq m$, where $\partial P$ denotes the boundary of the polygon $P$. We do not require the tiling is edge-to-edge (see \cite{GS}). \\
The graph $G_\Ti=\cup_{n \in \N} \partial P_n$ of the polygonal tiling $\Ti$ is just its one-skeleton. $v \in G_\Ti$ is said to be a vertex of $G_\Ti$ if $v$ is a branching point of $G_\Ti$, i.e. for every neighbourhood $N_v$ of $v$ in $\R^2$ there exist at least three different $P \in \Ti$ having non empty intersection with $N_v$. We write $V_\Ti$ for the set of all vertices of $G_\Ti$; of course, $v \in V_\Ti$ iff $v$ is a vertex of at least  one polygon. The edges of $G_\Ti$ are given by the open, straight-line segments $e \subset G_\Ti$ such that there exist $v_1,v_2 \in V_\Ti$ with $ \{v_1,v_2\}= \overline{e} \cap V_\Ti$. In this case we say $v_1$ and $v_2$ are adjacent to each other and $v_1, v_2$ are incident to $e$, we write $v_1 \sim v_2$ and  $v_1 \sim e$, $v_2 \sim e$ resp. The set of all edges of $G_\Ti$ is denoted by $E_\Ti$ and the length of an edge $e \in E_\Ti$ by $l_e$. If convenient, we will identify the edges with open, real intervals $I_e=(0_e,l_e)$ and the vertices $v_1 \sim e$ and $v_2 \sim e$ with $0_e$ and $l_e$ resp.\\
We impose some further constraints on the tiling $\Ti$. Namely, that the size of the boundary of the polygons is uniformly bounded from above by a constant, $|\partial P|=\sum_{e \subset \partial P}l_e \leq M_\Ti < \infty$ for all $P \in \Ti$, that the edge lengths are uniformly bounded away from $0$, i.e. there exist $l_\Ti > 0$ such that $l_e \geq l_\Ti$ for every edge $e \in E_\Ti$, that every polygon $P \in \Ti$ posesses an incircle 
 with incentre $x_P$ and  inradius $r_P$, 
  and that the inradii are uniformly bounded from below, $0<h_{\Ti} \leq r_P \leq H_{\Ti} < \infty$ for all $P \in \Ti$, whereas the bound from above is due to $r_P < M_\Ti$. By $H_\Ti$, $h_\Ti$ and $M_\Ti$ we mean the optimal constants in each case.\\
Our assumptions about the tiling $\Ti$ also imply that the  graph $G_\Ti$  is connected and has uniformly bounded vertex degrees, $d_v=|\{e \in E_\Ti: e \sim v\}| \leq d_\Ti < \infty$ for all $v \in V_\Ti$, and that for every compact set $K \subset \R^2$ there are only finitely many polygons $P \in  \Ti$ with $P \cap K \not = \emptyset$. \\
The distance $d(a,b)  < \infty$ between two arbitrary points $a,b \in G_\Ti$ is defined as the length of a shortest path connecting $a$ and $b$ (inside $G_\Ti$).  Thus $G_\Ti$ is a complete path metric space.\\
As in the forthcoming investigations the tiling will always be fixed, we  mostly drop the subscript $\Ti$ in our notations. Moreover, for the rest of the introduction $G$ is allowed to be  any locally finite, metric graph ($d_v < \infty$ for every vertex $v$) with edge lenghts uniformly bounded away from $0$ and for simplicity also bounded from above. \\ 
We define the spaces $L^p(G)$, $1 \leq p \leq \infty$, in the usual way to consist of all Borel measurable functions $f:G \mapsto \R$ with 
$$ \int_{G} |f(x)|^p\, d\mu(x) < \infty, \quad 1 \leq p < \infty, \quad \hbox{and} \quad {\rm ess\,sup}|f| < \infty, \quad p=\infty,$$
where $\mu$ is the one dimensional Hausdorff measure.\\
$C_c^\infty(G)$ is the space of all continuous functions $f:G \mapsto \R$ with compact support and such that for every edge $e \in E$  the restriction of $f$ to $e$, denoted by  $f_{|e}$, is  in $C^\infty(\bar{e})$, where $C^\infty(\bar{e})$  is the space of all up to the boundary infinitely often differentiable functions on $e$.  We define the quadratic form $Q_0$ on $C_c^\infty(G)$ by
$$ Q_0(f,g):=\int_{G} f'(x)g'(x) d\mu(x)=\sum_{e \in E_\Ti}\int_0^{l_e} f'_{|e}(x)g'_{|e}(x) dx \qquad (f,g \in  C_c^\infty(G)).$$
The form $Q_0$ is closable in $L^2(G)$ with closure $Q$. The domain of $Q$ is 
$$D(Q)=\left \{f  \in L^2(G): \begin{array}{l} f \hbox{ is continuous on } G, \\ f_{|e} \in H^1(e) \hbox{ for all } e \in E, \\ f' \in L^2(G) \end{array} \right \}=:H^1(G).$$
When $H^1(G)$ is equipped with its natural norm $$\|f\|_{H^1(G)}:=\sqrt{\int_{G } f^2(x)+ (f'(x))^2\,d\mu(x)}, \qquad f \in H^1(G),$$ it becomes a densely and continuously imbedded subspace of $L^2(G)$. An appropriate cut-off and smoothing argument shows that $C_c^\infty(G)$ is indeed a core of $Q$.\\
The operator $A$ associated with the form $Q$ is the Laplacian with Kirchhoff type vertex conditions
\begin{align*} D(A)&=\left \{f \in H^1(G): \begin{array}{ll} f_{|e}' \in H^1(e) & \hbox{for all } e \in E,\\ f'' \in L^2(G), &   \\ \sum_{e \sim v}\frac{\partial f_{|e}}{\partial n}(v)=0  & \hbox{for all } v  \in V \end{array} \right \},\\ 
               A(f)&=-f'', 
\end{align*}  
where $\frac{\partial f_{|e}}{\partial n}(v)$ denotes the outer normal derivative of $f_{|e}$ at $v$, i.e. given $e=(0,l_e)$  we set $\frac{\partial f_{|e}}{\partial n}(v)=-f'_{|e}(0)$ if the vertex $v$ is identified with $0$ and $\frac{\partial f_{|e}}{\partial n}(v)=f'_{|e}(l_e)$ if $v$ is identified with $l_e$.\\
As $Q$ is a closed, symmetric, non-negative form which satisfies the Beurling-Deny criteria, $Q$ is a Dirichlet form and $-A$ generates a symmetric Markov semigroup $e^{-At}$ (see \cite{Da1}).

\section{Nash Inequalities}

In this section we establish two different Nash inequalities, an one dimensional and a two dimensional, i.e.  $$ \|f\|_{L^2(G)}^{2+\frac{4}{\mu}} \leq \beta_\mu Q(f)\|f\|_{L^1(G)}^{\frac{4}{\mu}} \quad \hbox{for all } f \in H^1(G) \cap L^1(G),\;f \geq 0,$$ 
where $\mu = 1$ or $\mu=2$, and $\beta_{\mu}$ are constants independent of $f$.

First we mention that it suffices to consider functions $f \in C_c^\infty(G)$, $f \geq 0$, in the above inequalities because every $f \in H^1(G) \cap L^1(G)$, $f \geq 0$, can be simultanuously approximated in the norms $\|\cdot\|_{H^1(G)}$ and $\|\cdot\|_{L^1(G)}$ by functions $f_n \in C_c^\infty(G)$ with $f_n \geq 0$, $n \in \N$ (see \cite{Pang}).

One can prove the one dimensional Nash inequality along the same lines as one proves it in $\R^1$ (see \cite{Haes}), but we have chosen to present another easy proof, more in the spirit of this paper, that reduces the question for the metric graph to that for the real line.
\begin{theo} \label{Nash1}
There exists a constant $\beta_1 >0$ such that for every  $f \in H^1(G) \cap L^1(G)$,  $f \geq 0$, an one dimensional Nash inequality holds
\begin{equation}  \|f\|_{L^2(G)}^6 \leq \beta_1 Q(f)\|f\|_{L^1(G)}^4. \end{equation}  
\end{theo}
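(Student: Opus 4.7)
The plan is to pull $f$ back to a single nonnegative function $F$ on the real line whose $L^1$, $L^2$, and $H^1$-seminorms are proportional to those of $f$, so that the classical Nash inequality on $\R$,
$$\|F\|_{L^2(\R)}^6 \leq \beta_0\,\|F'\|_{L^2(\R)}^2\,\|F\|_{L^1(\R)}^4,$$
transfers immediately to the graph estimate. By the reduction indicated just before the theorem, it suffices to treat $f \in C_c^\infty(G)$ with $f \geq 0$. Since $f$ has compact support and the tiling is locally finite, only finitely many edges $E_+ \subset E$ carry a nontrivial restriction of $f$, and I write $G'$ for the corresponding finite subgraph.

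The key combinatorial device is the following. For each connected component $C$ of $G'$, I first note that $C$ must contain a vertex $v_0$ with $f(v_0) = 0$: since $G$ is connected and unbounded while $C$ is compact, some vertex of $C$ is incident to an edge outside $E_+$, and continuity of $f$ then forces $f$ to vanish there. Next, I double each edge of $C$ to form a multigraph $C^{(2)}$. All vertex degrees in $C^{(2)}$ are even, so by Euler's theorem there is a closed Eulerian circuit in $C^{(2)}$ starting at $v_0$; parameterized by arc length it yields a continuous, piecewise affine path $\phi_C : [0, L_C] \to C$, with $L_C = 2\sum_{e \subset C} l_e$, which traverses each edge of $C$ exactly twice.

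Setting $F_C := f \circ \phi_C$ produces $F_C \in H^1([0, L_C])$ with $F_C(0) = F_C(L_C) = f(v_0) = 0$; because each edge of $C$ appears in the parameterization exactly twice (irrespective of orientation), a piecewise change of variables gives
$$\int_0^{L_C} F_C^p\,dt = 2\int_C f^p\,d\mu \quad (p = 1,2), \qquad \int_0^{L_C} (F_C')^2\,dt = 2\sum_{e \subset C}\int_0^{l_e} (f'_{|e})^2\,dx.$$
I then arrange the $F_C$'s on pairwise disjoint intervals of $\R$ and extend by zero outside; since each $F_C$ vanishes at the endpoints of its interval, the glued $F$ is nonnegative and lies in $H^1(\R) \cap L^1(\R)$, satisfying $\|F\|_{L^p(\R)}^p = 2\|f\|_{L^p(G)}^p$ for $p = 1,2$ and $\|F'\|_{L^2(\R)}^2 = 2\,Q(f)$. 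Substituting into the $\R$-Nash inequality yields $8\|f\|_{L^2(G)}^6 \leq 32\,\beta_0\,Q(f)\,\|f\|_{L^1(G)}^4$, i.e.\ the claim with $\beta_1 = 4\beta_0$.

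The main obstacle, conceptually, is securing the boundary vertex $v_0$ with $f(v_0) = 0$ in every component — this is where connectedness and unboundedness of $G$ combine with compactness of $\mathrm{supp}(f)$. The Eulerian-circuit step on the doubled graph is classical, and the norm identities are then a routine bookkeeping exercise.
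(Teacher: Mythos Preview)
Your proof is correct and follows essentially the same strategy as the paper: unroll the supported part of the graph onto a real interval via an Eulerian circuit and then invoke the classical Nash inequality on $\R$. The only difference is in how even vertex degrees are arranged---you simply double every edge, whereas the paper first collapses all boundary vertices to a single point and then adds at most one parallel copy per existing edge (using that the number of odd-degree vertices is even); your variant is slightly cleaner and in fact yields the better constant $\beta_1 = 4\alpha_1$ in place of the paper's $32\alpha_1$.
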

\begin{proof} Given any $f \in C_c^\infty(G)$, $f \geq 0$. Consider the subgraph $H$ of $G$ induced by all vertices inside a Ball $B_R(v_0):=\{x \in G:d(x,v_0) <R\}$ with $v_0 \in V$, such that ${\rm supp}(f) \subset B_R(v_0)$, and all vertices not in $B_R(v_0)$ which are adjacent to a vertex inside $B_R(v_0)$. The first set of vertices will be  denoted by $V_{in}$ and the latter by $V_{out}$, clearly $f(v)=0$ for every $v \in V_{out}$. \\
First, we identify every vertex in $V_{out}$ with one vertex $v_{out}$ and obtain a new metric graph $\overline{H}$, possibly having multiple edges between $v_{out}$ and some vertices in $V_{in}$, which we keep, and some loops at $v_{out}$, which we can discard since $f \equiv 0$ there. Now, for every finite graph $\overline{H}$ it is possible to construct  a new graph $\tilde{H}$ out of it by adding at most one new edge between in $\overline{H}$ already adjacent vertices, such that every vertex in the resulting graph $\tilde{H}$ has even degree (because the number of vertices with odd degree must be even, see \cite{Dies}). \\
If $v_1 \sim e \sim v_2$ in $\overline{H}$ and $\tilde{e}$ is a new edge between $v_1$ and $v_2$ in $\tilde{H}$, we choose $\tilde{e}$ to have the same length as $e$ in the metric graph $\tilde{H}$ and extent $f$ to a function  $\tilde{f}$ on $\tilde{H}$ by setting $\tilde{f}(\tilde{e})=f(e)$.\\
$\tilde{H}$ admits an Euler tour (see \cite{Dies}), starting and terminating in $v_{out}$, and the function $\tilde{f}:\tilde{H} \mapsto \R$ can be regarded as a function on the interval $(0,l)$, where the interval $(0,l)$ is obtained by concatenating the edges in their order of appearance during the euler tour. $\tilde{f} \in H^1_0(0,l)$ and hence the Nash inequality for functions in $H^1(\R)$ is valid for $\tilde{f}$. We get
$$\|f\|_{L^2(G)}^6 \leq \|\tilde{f}\|^6_{L^2(\R)} \leq \alpha_1 \int_\R (\tilde{f}'(x))^2\,dx \|\tilde{f}\|^4_{L^1(\R)}\leq 2^5 \alpha_1 Q(f)\|f\|^4_{L^1(G)},$$
where we have used that every edge of $G$ occurs at most twice in the Euler tour.
\end{proof} 
\begin{rem}
The same proof also works for an arbitrary non-compact, connected, locally finite, metric graph with edge lengths uniformly bounded away from $0$. \\
The resulting constant $\beta_1$ does not depend on the tiling or metric graph. The optimal value for $\alpha_1$ was determined in \cite{CL}, $\alpha_1 \sim 0.171$, so $\beta_1 =  6$ will do.
\end{rem} 
In order to achieve a two dimensional Nash inequality, we follow the approach in \cite{Pang}. We take a function $f \in C_c^\infty(G_\Ti)$, $f \geq 0$, and consider its restriction $f_{|\partial P}$ to the boundary of one Polygon $P \in \Ti$. Next we construct a function $F \in H^1({\rm Int}(P)) \cap C(P)$, such that $F_{|\partial P}=f_{|\partial P}$. In the following we simply write $f$ for $f_{|\partial P}$. Furthermore, we need $F$ to fulfil the estimates 
\begin{gather*} \|f\|^2_{L^2(\partial P)} \leq C_1 \|F\|^2_{L^2(P)}, \quad \|F\|_{L^1(P)} \leq C_2 \|f\|_{L^1(\partial P)} \\ \hbox{and} \quad \int_P |\nabla F(x)|^2 dx \leq C_3 \int_{\partial P} (f'(x))dx \end{gather*}
with constants $C_1,C_2,C_3$  depending only on the polygon $P$.\\
Let us say $v_1,\ldots, v_n$ are the vertices of $P$, ordered in that way  anticlockwise along the boundary $\partial P$. We take the incentre $x_P$ of the incircle belonging to $P$, decompose $P$  into $n$ triangles $\Delta_1=v_1v_2x_p$, $\ldots$,  $\Delta_n=v_nv_1x_p$ and consider at first every triangle $\Delta_i$, $i=1,\ldots,n$ separately.\\
$r_P$ is the length of the altitude of every triangle $\Delta_i$ with respect to the base side $v_iv_{i+1}\; ({\rm mod} \, n)$. We denote by $w_i$  the foot points and by $d_i$ the distance between $v_i$ and $w_i$. Without loss of generality, we may assume that $v_i=(0,0)$, $v_{i+1}=(l_i,0)$ and $x_P=(d_i,r_p)$, where $l_i$ is the length of the edge $e_i$ connecting $v_i$ and $v_{i+1}$. Thus $f_{|e_i}$ is regarded as a function on the interval $[0,l_i]$ and is denoted by $f_i$.

\begin{figure}[b]
\setlength{\unitlength}{1cm}
\begin{picture}(8,6)
\put(2,4.5){$P$}
\put(3.5,5){\circle*{0,1}}
\put(3.5,5.1){$v_5$}
\put(3.5,5){\line(1,0){3}}
\put(6.5,5){\circle*{0,1}}
\put(6.5,5.1){$v_4$}
\put(6.5,5){\line(3,-4){1.5}}
\put(8,3){\circle*{0,1}}
\put(8.1,3){$v_3$}
\put(8,3){\line(-3,-4){1.5}}
\put(6.5,1){\circle*{0,1}}
\put(6.6,0.7){$v_2$}
\put(6.5,1){\line(-1,0){3}}
\put(3.5,1){\circle*{0,1}}
\put(3.2,0.7){$v_1$}
\put(3.5,1){\line(-3,4){1.5}}
\put(2,3){\circle*{0,1}}
\put(1.6,3){$v_6$}
\put(2,3){\line(3,4){1.5}}

\put(5,3){\circle*{0,1}}
\put(5.2,3.1){$x_P$}

\put(3.5,1){\line(3,4){1.5}}
\put(6.5,1){\line(-3,4){1.5}}
\put(8,3){\line(-1,0){3}}
\put(2,3){\line(1,0){3}}
\put(3.5,5){\line(3,-4){1.5}}
\put(6.5,5){\line(-3,-4){1.5}}

\put(5,3){\line(0,-1){2}}
\put(5.1,1.8){$r_P$}

\put(5,1){\circle*{0,1}}
\put(5,0.7){$w_1$}
\put(4,0.7){$d_1$}

\put(4.4,1.5){$\Delta_1$}
\put(6.5,2){$\Delta_2$}
\put(6.5,3.7){$\Delta_3$}
\put(5,4){$\Delta_4$}
\put(3.2,3.7){$\Delta_5$}
\put(3.2,2){$\Delta_6$}

\end{picture}
\end{figure}

We parametrize the triangle $\Delta_i$  by 
$$\Delta_i=\{(x,y):0 \leq y \leq r_p,\,\frac{d_i}{r_p}y \leq x \leq l_i-\frac{l_i-d_i}{r_P}y\}$$
and define the function $F_i$ on $\Delta_i$ by
$$F_i(x,y):=\left(1-\frac{y}{r_P}\right)f_i\left(\frac{x-\frac{d_i}{r_P}y}{1-\frac{1}{r_P}y}\right)+ \frac{k}{r_P}y,$$ 
where $k \geq 0$ is a constant not depending on $i$ and to be determined later. Furthermore we set $F:=\cup_{i=1,\ldots,n}F_i:P \mapsto \R$; of course, $F \in H^1({\rm Int}(P)) \cap C(P)$ and $F_{|\partial P}=f_{|\partial P}$.\\
For sake of simplicity we will drop the subscript $i$ during the intermediate steps of the forthcoming analysis and write mostly $r$ for $r_P$.
\begin{align*} \int_{\Delta_i} F_i^2(x,y) dxdy&=\int_0^r \int_{\frac{d}{r}y}^{l-\frac{l-d}{r}y}\left(1-\frac{y}{r}\right)^2 f^2\left(\frac{x-\frac{d}{r}y}{1-\frac{1}{r}y}\right)  \notag \\
               &\hspace{-7mm}+ 2\frac{k}{r}y\left(1-\frac{y}{r}\right) f\left(\frac{x-\frac{d}{r}y}{1-\frac{1}{r}y}\right) +\frac{k^2}{r^2}y^2\, dxdy \notag \\
               &= \int_0^r \int_{\frac{d}{r}y}^{l-\frac{l-d}{r}y}  2\frac{k}{r}y\left(1-\frac{y}{r}\right)  f\left(\frac{x-\frac{d}{r}y}{1-\frac{1}{r}y}\right)  +\frac{k^2}{r^2}y^2\, dxdy \notag \\
                                    &\hspace{-7mm}+\int_0^r\left(1-\frac{y}{r}\right)^3\,dy\int_0^lf^2(x)\,dx \; \geq \;  \frac{r}{4}\|f_i\|^2_{L^2(e_i)}
\end{align*}
Hence, we have easily achieved the first estimate we aim at with $C_1=4/r$
\begin{equation} \label{Ineq1} \|f\|^2_{L^2(\partial P)} \leq \frac{4}{r} \|F\|^2_{L^2(P)}. \end{equation}
The second estimate follows from the equalities below
\begin{align*}
\int_{\Delta_i} F_i(x,y)\,dxdy &=\int_0^r \int_{\frac{d}{r}y}^{l-\frac{l-d}{r}y} \left(1-\frac{y}{r}\right) f\left(\frac{x-\frac{d}{r}y}{1-\frac{1}{r}y}\right) +\frac{k}{r}y\,dxdy \notag \\
                             &=\int_0^r\left(1-\frac{y}{r}\right)^2\,dy\int_0^l f(x)\,dx + \int_0^r l\frac{k}{r}y(1-\frac{y}{r})dy \notag \\
                             &=\frac{r}{3}\|f\|_{L^1(e_i)}+\frac{1}{6}l_ikr 
\end{align*}
if we choose $0\leq k \leq  \|f_{|\partial P}\|_{L^1(\partial P)}/|\partial P|$ and add up for $i=1,\ldots,n$:                           
\begin{equation} \label{Ineq2}\|F\|_{L^1(P)} \leq  \frac{r}{2}\|f\|_{L^1(\partial P)}. \end{equation}
Now we are going to estimate the Dirichlet integral of $F_i$ in terms of the Dirichlet integral of $f_i$. We have
\begin{align*} \frac{\partial F_i}{\partial x}&=f_i'\left(\frac{x-\frac{d_i}{r}y}{1-\frac{1}{r}y}\right),\\
               \frac{\partial F_i}{\partial y}&=\frac{k}{r}-\frac{1}{r}f_i\left(\frac{x-\frac{d_i}{r}y}{1-\frac{1}{r}y}\right)+\frac{x-d_i}{r-y}f_i'\left(\frac{x-\frac{d_i}{r}y}{1-\frac{1}{r}y}\right)
\end{align*}               
and so
\begin{align*} \int_{\Delta_i} \left( \frac{\partial F_i(x,y)}{\partial x}\right)^2+\left( \frac{\partial F_i(x,y)}{\partial y}\right)^2dxdy    &=\int_\Delta \left \{ \left( f'\left(\frac{x-\frac{d}{r}y}{1-\frac{1}{r}y}\right)\right)^2 \right. \\                   
  &\hspace{-40mm}  + \frac{k^2}{r^2} +\frac{1}{r^2}f^2\left(\frac{x-\frac{d}{r}y}{1-\frac{1}{r}y} \right) + \left(\frac{x-d}{r-y}\right)^2 \left(f'\left(\frac{x-\frac{d}{r}y}{1-\frac{1}{r}y}\right)\right)^2 \\ 
  &\hspace{-40mm}+2\frac{k}{r}\left[\frac{x-d}{r-y}f'\left(\frac{x-\frac{d}{r}y}{1-\frac{1}{r}y}\right)-\frac{1}{r}f\left(\frac{x-\frac{d}{r}y}{1-\frac{1}{r}y}\right)\right] \\ 
                  &\hspace{-40mm} \left. - \frac{2}{r}\left(\frac{x-d}{r-y}\right) f\left(\frac{x-\frac{d}{r}y}{1-\frac{1}{r}y}\right) f'\left(\frac{x-\frac{d}{r}y}{1-\frac{1}{r}y}\right) \right \} dxdy.  
\end{align*}
In the fourth summand, $\left(\frac{x-d_i}{r-y}\right)^2 \left(f_i'\left(\frac{x-\frac{d_i}{r}y}{1-\frac{1}{r}y}\right)\right)^2$, on the right hand side we estimate $(\frac{x-d_i}{r-y})^2$ from above by  $(\frac{m_i}{r})^2$ with $m_i={\rm max}\{d_i,|l_i-d_i|\}$. Then, every term can be integrated exactly and we get  
\begin{align} \label{Dir} \int_{\Delta_i} |\nabla F_i(x,y)|^2 \,dxdy & \leq  \left \{ \left( \frac{r}{2}+\frac{m^2_i}{2r} \right ) \| f'_i \|^2_{L^2(e_i)} \right \} \notag \\ 
              &\hspace{-10mm}+\left\{ \frac{l_i k^2}{2 r}-\frac{2k}{r}\|f_i\|_{L^1(e_i)}+\frac{1}{r}\|f_i\|^2_{L^2(e_i)} \right \}  \notag \\
              &\hspace{-10mm} +  \left \{ k \frac{l_i-d_i}{r}f_i(l_i)-\frac{l_i-d_i}{r}f^2_i(l_i)  +k\frac{d_i}{r}f_i(0)- \frac{d_i}{r}f_i^2(0) \right \}. 
\end{align}
Now we take $k \geq 0$ to be the minimum of $\{f_1(0),\ldots,f_n(0),\|f_{|\partial P}\|_{L^1(\partial P)}/|\partial P|\}$.
The last term on the right side of \eqref{Dir} becomes negative and we drop it. We add up for $i=1,\ldots,n$ and arrive at 
\begin{align} \label{Dir2}\int_P |\nabla F(x,y)|^2\,dxdy & \leq \frac {|\partial P| k^2}{2r} -\frac{2k}{r} \|f_{|\partial P}\|_{L^1(\partial P)} \\    &\hspace{-10mm}+\frac{1}{r}\|f_{|\partial P}\|^2_{L^2(\partial P)} +  \left(  \frac{r}{2}+\frac{m^2}{2r} \right ) \| f'_{|\partial P} \|^2_{L^2(\partial P)}, \notag
\end{align}                           
where $m \leq |\partial P|/2$ is the maximum of $\{m_i: i=1, \ldots,n\}$.\\
As the value $k$ is taken by the function $f_{|\partial P}$, say at point $z$, and setting $g:=f_{|\partial P}-k$, we obtain, by splitting $\partial P$ at the point $z$ into two arcs, $(\partial P)_1$ and $(\partial P)_2$, of equal length and writing $|x|$ for the arclength from $z$ to $x$,
\begin{align*}
\int_{\partial P} g^2(x)\,dx &= \int_{(\partial P)_1} g^2(x)\,dx +\int_{(\partial P)_2} g^2(y)\,dy\\ &=\int_{(\partial P)_1}\left (\int_z^x g'(t)\,dt \right)^2\,dx+\int_{(\partial P)_2} \left (\int_z^y g'(t)\,dt \right)^2\,dy\\
&\leq \int_{(\partial P)_1}|x| \left (\int_z^x (g'(t))^2\,dt \right )\,dx+ \int_{(\partial P)_2}|y| \left ( \int_z^y (g'(t))^2\,dt \right)\,dy\\
&\leq \left ( \int_{(\partial P)_1} (g'(t))^2\,dt \right )\int_{(\partial P)_1}|x|\,dx+ \left ( \int_{(\partial P)_2} (g'(t))^2\,dt \right )\int_{(\partial P)_2}|y|\,dy\\
&=\left (\int_{(\partial P)_1} (g'(t))^2\,dt + \int_{(\partial P)_2} (g'(t))^2\,dt\right )\int_0^{\partial  P/2}x\,dx\\
&= \frac{1}{2}\left(\frac{|\partial P|}{2} \right)^2 \left (\int_{\partial P}(g'(t))^2\,dt \right ) = \frac{1}{8}|\partial P|^2 \left (\int_{\partial P}(g'(t))^2\,dt \right ),
\end{align*}
and therewith
\begin{align*} \frac{|\partial P|^2}{8}\|f_{|\partial P}'\|^2_{L^2(\partial P)} &= \frac{|\partial P|^2}{8}\|(f_{|\partial P}-k)'\|^2_{L^2(\partial P)}= \frac{|\partial P|^2}{8}\|g'\|^2_{L^2(\partial P)}\geq \|g\|^2_{L^2(\partial P)} \\
                                            & \hspace{-7mm} = \int_{\partial P} (f_{|\partial P}(x)-k)^2 \,dx=\|f_{|\partial P}\|^2_{L^2(\partial P)} -2k \|f_{|\partial P}\|_{L^1}(\partial P) + k^2|\partial P|, 
\end{align*}  
or 
\begin{align*}
\frac{1}{r}\|f_{|\partial P}\|^2_{L^2(\partial P)} \leq \frac{ |\partial P|^2}{8r} \|f_{\partial P}'\|^2_{L^2(\partial P)}+\frac{2k}{r} \|f_{|\partial P}\|_{L^1(\partial P)}-\frac{ |\partial P| k^2}{r}.
\end{align*}                                         
We combine this with \eqref{Dir2}, and the last of our desired inequalities follows 
\begin{equation} \label{Ineq3} \int_P |\nabla F(x,y)|^2\,dxdy \leq  \left(  \frac{r}{2} + \frac{ |\partial P|^2}{4r} \right ) \| f'_{|\partial P} \|^2_{L^2(\partial P)}. \end{equation} 
\begin{theo} \label{Nash2}
There exists a constant $\beta_2 >0$ such that for every  $f \in H^1(G) \cap L^1(G)$,  $f \geq 0$, a two dimensional Nash   inequality  holds
\begin{equation} \|f\|_{L^2(G)}^4 \leq \beta_2 Q(f)\|f\|_{L^1(G)}^2. \end{equation} 
\end{theo}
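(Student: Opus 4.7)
The plan is to reduce the two-dimensional Nash inequality on the metric graph $G$ to the classical two-dimensional Nash inequality on $\R^2$, using the per-polygon extensions $F_P$ already constructed in the discussion leading to \eqref{Ineq3}. By the remark at the beginning of Section 2 it suffices to take $f \in C_c^\infty(G)$, $f \geq 0$, so that only finitely many polygons $P \in \Ti$ meet $\operatorname{supp}(f)$.

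First I would glue the per-polygon extensions into a global function on $\R^2$. For every polygon $P$ meeting $\operatorname{supp}(f)$ take $F_P \in H^1(\operatorname{Int}(P)) \cap C(P)$ as constructed above (with $F_P \geq 0$ since $f \geq 0$ and $k_P \geq 0$), and set $F_P \equiv 0$ on the remaining polygons, which is consistent because in that case the associated constant $k_P$ vanishes. Define $F : \R^2 \to \R$ by $F|_P = F_P$. The crucial point is that $F_P|_{\partial P} = f|_{\partial P}$ for every $P$, so on each edge shared by two polygons the two extensions agree; hence $F$ is continuous across the polygon boundaries. Together with piecewise $H^1$-regularity and compactness of the support, this yields $F \in H^1(\R^2) \cap L^1(\R^2)$, $F \geq 0$.

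Next I would sum \eqref{Ineq1}--\eqref{Ineq3} over all $P \in \Ti$. Since $G$ is the one-skeleton of a tiling of the entire plane, every edge $e \in E$ lies on the boundary of exactly two polygons, giving the double-counting identities
\[
\sum_{P} \|f\|_{L^2(\partial P)}^2 = 2\|f\|_{L^2(G)}^2, \quad \sum_{P} \|f\|_{L^1(\partial P)} = 2\|f\|_{L^1(G)}, \quad \sum_P \|f_{|\partial P}'\|_{L^2(\partial P)}^2 = 2\,Q(f).
\]
Combining these with the uniform bounds $h_\Ti \leq r_P \leq H_\Ti$ and $|\partial P| \leq M_\Ti$, the three inequalities \eqref{Ineq1}--\eqref{Ineq3} produce constants $c_1, c_2, c_3 > 0$ depending only on $\Ti$ (explicitly $c_1 = 2/h_\Ti$, $c_2 = H_\Ti$, $c_3 = H_\Ti + M_\Ti^2/(2h_\Ti)$) such that
\[
\|f\|_{L^2(G)}^2 \leq c_1 \|F\|_{L^2(\R^2)}^2, \quad \|F\|_{L^1(\R^2)} \leq c_2 \|f\|_{L^1(G)}, \quad \|\nabla F\|_{L^2(\R^2)}^2 \leq c_3\, Q(f).
\]
Applying the classical two-dimensional Nash inequality $\|F\|_{L^2(\R^2)}^4 \leq \alpha_2 \|\nabla F\|_{L^2(\R^2)}^2 \|F\|_{L^1(\R^2)}^2$ to $F$ and chaining the three bounds yields the claim with $\beta_2 = c_1^2 c_2^2 c_3\, \alpha_2$.

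The main obstacle is not any fresh analytic estimate, since the real work has been done already in deriving \eqref{Ineq1}--\eqref{Ineq3}, but verifying the gluing step: one must check that the piecewise-defined $F$ genuinely lies in $H^1(\R^2)$. This rests on the matching of traces $F_P|_{\partial P \cap \partial P'} = f|_{\partial P \cap \partial P'} = F_{P'}|_{\partial P \cap \partial P'}$ on shared edges, which is exactly why the earlier construction of $F_P$ was designed to preserve boundary values, together with the combinatorial fact that in a planar tiling every edge of $G$ belongs to exactly two tiles, which is what makes the double-counting above give a clean factor of $2$.
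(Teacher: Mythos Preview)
Your proposal is correct and follows essentially the same route as the paper: glue the per-polygon extensions $F_P$ into a global $F\in H^1(\R^2)$, apply the classical two-dimensional Nash inequality to $F$, and transfer back via \eqref{Ineq1}--\eqref{Ineq3} using the double-counting factor $2$; your constants $c_1=2/h_\Ti$, $c_2=H_\Ti$, $c_3=H_\Ti+M_\Ti^2/(2h_\Ti)$ reproduce exactly the paper's value $\beta_2=4\alpha_2\,H_\Ti^2 h_\Ti^{-2}\bigl(H_\Ti+M_\Ti^2/(2h_\Ti)\bigr)$. The only difference is presentational: you spell out the trace-matching argument for $F\in H^1(\R^2)$ more explicitly than the paper, which simply asserts it.
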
       
\begin{proof}
Given $f \in C_c^\infty(G_\Ti)$, $f \geq 0$, we construct in the interior of every Polygon $P \in \Ti$ a function $F_P$  as above and glue them together to get a Function $F \in H^{1}(\R^2)$. Now we can apply the Nash inequality for functions in $H^1(\R^2)$ to $F$ and obtain, using inequalities \eqref{Ineq1}, \eqref{Ineq2}, and \eqref{Ineq3}, 
\begin{align*} \|f\|_{L^2(G)}^4 &=\frac{1}{4}\left(2 \sum_{e \in E_\Ti}\int_e f_{|e}^2\right )^2 =\frac{1}{4}\left(\sum_{P \in \Ti}\int_{\partial P} f^2_{|\partial P} \right )^2 \\
&\leq \frac{1}{4}\left (\sum_{P \in \Ti} \frac{4}{r_P}\int_{P} F^2_{|P} \right )^2 \leq \frac{4}{h^2} \left (\int_{\R^2} F^2 \right)^2 = \frac{4}{h^2} \|F\|_{L^2(\R^2)}^4 \\
 &\leq \alpha_2 \frac{4}{h^2} \left(\int_{\R^2} |\nabla F|^2 \right)\|F\|^2_{L^1(\R^2)} =\alpha_2 \frac{4}{h^2}\left(\sum_{P \in \Ti}\int_P |\nabla F|^2 \right)\left(\sum_{P \in \Ti}\int_P  |F| \right)^2 \\
 &\leq \alpha_2 \frac{4}{h^2}\left( \sum_{P \in \Ti} \left( \frac{r_P}{2}+\frac{| \partial P|^2}{4r_P} \right ) \int_{\partial P} (f')^2 \right )\left(\sum_{P \in \Ti} \frac{r_P}{2} \int_{\partial P} |f| \right)^2\\
 &\leq \alpha_2 \frac{4}{h^2}\left(\frac{H}{2}+\frac{M^2}{4h} \right )\left( 2 \int_G (f')^2\right ) \left(\frac{H}{2}2\int_G |f| \right)^2\\
                         &\leq  4 \alpha_2 \frac{H^2}{h^2}\left(H +\frac{M^2}{2h}\right)Q(f)\|f\|^2_{L^1(G)},
\end{align*}
where $H \geq r_P$, $h \leq r_P$, and $M \geq |\partial P|$ for every $P \in \Ti$ are as explained in the introduction.
\end{proof}  
\begin{rem}
This time the constant $\beta_2$ depends on the specific tiling via the parameters $H_\Ti$, $h_\Ti$, and $M_\Ti$. The optimal value for $\alpha_2$ was determined in \cite{CL}, $\alpha_2 \sim 0.087$, so $\beta_2=\frac{H_\Ti^2}{2h_\Ti^2}\left(H_\Ti +\frac{M_\Ti^2}{2h_\Ti}\right)$ will do.
\end{rem}
 
\section{Kernel Estimates}  
There is a well known equivalence between Nash inequalities for Dirichlet forms and ultracontractive estimates for the corresponding symmetric Markov semigroups (see \cite{Da1}, Theorem 2.4.6.). Hence, we have the following corollary of Theorems \ref{Nash1} and \ref{Nash2}.
\begin{theo} \label{ultra}
For every $f \in L^1(G) \cap L^\infty(G)$ and every $t >0$ the following ultracontractive estimates hold true
\begin{equation} \label{ultra1}
\|e^{-At}f\|_{L^\infty(G)} \leq \gamma_\mu t^{-\frac{\mu}{2}}\|f\|_{L^1(G)}, \quad \mu=1 \hbox{ or } \mu=2,
\end{equation}
with $\gamma_1=(\beta_1/2)^{1/2}$  and $\gamma_2=\beta_2$.
\end{theo}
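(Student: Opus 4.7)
The plan is to invoke the classical equivalence between Nash inequalities and ultracontractivity of symmetric Markov semigroups (the reference to Davies' Theorem 2.4.6 cited immediately before the statement), but I would sketch the argument to ensure the constants $\gamma_1$ and $\gamma_2$ come out exactly as claimed. Fix $f \in L^1(G) \cap L^\infty(G)$; by interpolation $f \in L^2(G)$, and by positivity-preservation of $e^{-At}$ (which follows from the Beurling--Deny criteria, as noted in the introduction) together with the pointwise bound $|e^{-At}f| \leq e^{-At}|f|$, it suffices to treat the nonnegative case $f \geq 0$. Set $u(t) := e^{-At}f$ and $N(t) := \|u(t)\|_{L^2(G)}^2$.

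Two standard facts drive the calculation. First, since $e^{-At}$ is sub-Markov, $\|u(t)\|_{L^1(G)} \leq \|f\|_{L^1(G)}$ and $u(t) \geq 0$, so Theorem \ref{Nash1} (resp.\ Theorem \ref{Nash2}) applies to $u(t)$ for every $t > 0$. Second, self-adjointness of $A$ with $Q(g) = \langle Ag,g\rangle$ on $D(A)$, together with $u(t) \in D(A) \subset H^1(G)$ for $t>0$, gives $N'(t) = -2Q(u(t))$. Substituting the Nash inequality yields the differential inequality
$$ N(t)^{1+2/\mu} \leq \beta_\mu Q(u(t))\|f\|_{L^1(G)}^{4/\mu} = -\tfrac{\beta_\mu}{2} N'(t)\|f\|_{L^1(G)}^{4/\mu}, $$
and separating variables and integrating from $0$ to $t$ (discarding the positive boundary term $N(0)^{-2/\mu}$) produces the $L^1 \to L^2$ bound
$$ \|u(t)\|_{L^2(G)} \leq \left(\frac{\mu\beta_\mu}{4}\right)^{\mu/4} t^{-\mu/4}\|f\|_{L^1(G)}. $$

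To upgrade from $L^1 \to L^2$ to $L^1 \to L^\infty$ I would invoke duality and the semigroup property: since $e^{-At}$ is self-adjoint, the same constant bounds the $L^2 \to L^\infty$ norm, and factoring $e^{-At} = e^{-At/2}\circ e^{-At/2}$ gives
$$ \|e^{-At}f\|_{L^\infty(G)} \leq \left(\frac{\mu\beta_\mu}{4}\right)^{\mu/2}(t/2)^{-\mu/2}\|f\|_{L^1(G)} = \left(\frac{\mu\beta_\mu}{2}\right)^{\mu/2} t^{-\mu/2}\|f\|_{L^1(G)}. $$
Specialising to $\mu = 1$ recovers $\gamma_1 = (\beta_1/2)^{1/2}$ and to $\mu = 2$ recovers $\gamma_2 = \beta_2$, matching the statement.

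No serious obstacle is expected. The only points requiring mild care are the regularity needed to justify $N'(t) = -2Q(u(t))$ and to apply the Nash inequality at each $t>0$, both of which are immediate from $u(t) \in D(A) \subset D(Q) = H^1(G)$ for the analytic semigroup generated by a nonnegative self-adjoint operator; and the verification that the semigroup is simultaneously $L^1$-contractive and positivity-preserving, which is built into its being a symmetric Markov semigroup as recorded at the end of the introduction. The whole argument is essentially bookkeeping on the abstract Davies scheme.
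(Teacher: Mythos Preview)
Your proposal is correct and follows exactly the route the paper takes: the paper simply invokes the Nash-to-ultracontractivity equivalence from Davies' Theorem~2.4.6 and records the resulting constants, while you have additionally written out the standard differential-inequality argument behind that citation and verified that the constants $(\mu\beta_\mu/2)^{\mu/2}$ specialise to $\gamma_1=(\beta_1/2)^{1/2}$ and $\gamma_2=\beta_2$. Nothing further is needed.
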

\begin{rem} \label{ultrarem}
For small times  the one dimensional estimate ($\mu=1$) is sharper, but in the long run the two dimensional estimate ($\mu=2$) gives the better bound.  The exact time the two dimensional behaviour starts to dominate depends on the constant $\beta_2$ and therefore on the values of $H_\Ti$, $h_\Ti$ and $M_\Ti$. We can rewrite \eqref{ultra1} to make this kind of ``dimension transition'' more explicit 
\begin{equation} \label{ultra2}
    \|e^{-At}\|_{L^1 \to L^\infty} \leq \left \{ \begin{array}{ll} \sqrt{3}\, t^{-1/2} &  \quad (0 < t \leq \frac{1}{3} \beta^2), \\
                                                          \beta \, t^{-1}   &  \quad   (t \geq \frac{1}{3} \beta^2), \end{array} \right .
\end{equation}
where we have set $\beta_1=6$ and $\beta=\beta_2$. 
\end{rem}
We know that the semigroup $e^{-At}$ has a kernel $k(t,\cdot,\cdot) \in L^\infty(G \times G)$, $t >0$, so we would like to establish Gaussian estimates. We refer to   \cite{DP} for the following result, which indeed gives us the desired heat kernel bound (see also \cite{Da1} and \cite{Pang}). 
\begin{theo} \label{gauss}
There exists a constant $\eta > 0$ such that for all $t> 0$ and $x,y \in G$ the kernel estimate
$$ 0 \leq k(t,x,y) \leq a(t,d(x,y))e^{-d^2(x,y)/(4t)} $$ 
holds with $a(t,d(x,y))= \eta \min \{ t^{-1/2}(1+d^2(x,y)/t)^{1/2}, t^{-1}(1+d^2(x,y)/t)\}$.
\end{theo}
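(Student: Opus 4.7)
The plan is to apply Davies' exponential perturbation method, as formalized in the theorem cited as \cite{DP}, to convert the two-regime ultracontractive bound \eqref{ultra2} into the claimed Gaussian kernel estimate.

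First, I would set up the Davies perturbation. For a bounded continuous function $\phi:G\to\R$ with $\phi_{|e}\in H^1(e)$ and $|\phi'|\leq 1$ almost everywhere, and for $\alpha\in\R$, define the perturbed form $Q_\alpha(f):=Q(e^{\alpha\phi}f,e^{-\alpha\phi}f)$ on $H^1(G)$. An edge-wise computation using the Leibniz rule gives
\[Q_\alpha(f)\;=\;Q(f)-\alpha^2\int_G(\phi')^2 f^2\,d\mu\;\geq\;Q(f)-\alpha^2\|f\|_{L^2(G)}^2.\]
Hence the perturbed operator $A_\alpha$ associated with $Q_\alpha$ satisfies $A_\alpha+\alpha^2\geq 0$, which yields the $L^2$-bound $\|e^{-A_\alpha t}\|_{L^2\to L^2}\leq e^{\alpha^2 t}$.

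Next, since $e^{-A_\alpha t}=e^{-\alpha\phi}e^{-At}e^{\alpha\phi}$ as operators, the kernel of $e^{-A_\alpha t}$ is $e^{-\alpha\phi(x)}k(t,x,y)e^{\alpha\phi(y)}$, so $\|e^{-A_\alpha t}\|_{L^1\to L^\infty}$ coincides with the essential supremum of this kernel. Combining the $L^2\to L^2$ bound just obtained with the ultracontractive estimate \eqref{ultra2} via the standard Davies splitting $e^{-A_\alpha t}=e^{-A_\alpha t/3}\cdot e^{-\alpha\phi}e^{-At/3}e^{\alpha\phi}\cdot e^{-A_\alpha t/3}$ yields
\[\|e^{-A_\alpha t}\|_{L^1\to L^\infty}\leq c\,M(t)\,e^{\alpha^2 t},\]
where $M(t)$ is the right-hand side of \eqref{ultra2}. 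Substituting the $1$-Lipschitz function $\phi(z):=d(z,y)\wedge N$ (available because $G$ is a path metric space) and letting $N\to\infty$ produces the pointwise bound
\[k(t,x,y)\;\leq\;c\,M(t)\,e^{\alpha^2 t-\alpha d(x,y)}\qquad(\alpha\geq 0);\]
optimizing at $\alpha=d(x,y)/(2t)$ then gives the Gaussian factor $e^{-d^2(x,y)/(4t)}$.

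The main obstacle, and the reason for invoking the specific result of \cite{DP}, lies in promoting the crude prefactor $M(t)$ to the sharper $a(t,d(x,y))$ carrying the correction $(1+d^2(x,y)/t)^{\mu/2}$. The naive optimization above yields only $M(t)e^{-d^2(x,y)/(4t)}$, and the improvement consists of applying \eqref{ultra2} at a rescaled time $s\sim t/(1+d^2/t)$ and optimizing jointly over $\alpha$ and $s$; this produces the $(1+d^2/t)^{\mu/2}$ factor, and because \eqref{ultra2} switches from $t^{-1/2}$ to $t^{-1}$ behaviour at $t\approx\beta^2/3$, it naturally yields the minimum of the two expressions in the definition of $a$. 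This joint optimization is precisely the content of the theorem in \cite{DP}, so at that step I would quote their result rather than redo the computation.
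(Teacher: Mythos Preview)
Your proposal is correct and follows the same route as the paper: the paper simply states that the theorem follows from the ultracontractive bounds of Theorem~\ref{ultra} via the result of Davies and Pang in \cite{DP}, without spelling out the perturbation argument at all. You have supplied the standard Davies exponential-perturbation mechanism as background and then, like the paper, deferred the sharp prefactor optimization to \cite{DP}; so there is no substantive difference in approach, only in the level of exposition.
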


\section{Concluding Remarks}
It might be interesting to investigate around which time the global, i.e. the two dimensional nature of a concrete metric graph, arising from a tiling of the plane, begins to dominate its local, i.e. one dimensional nature. We have seen in Remark \ref{ultrarem} that this essentially depends on $\beta=\beta_2$, for which we can set $\beta=\frac{H_\Ti^2}{2h_\Ti^2}\left(H_\Ti +\frac{M_\Ti^2}{2h_\Ti}\right)$, and therefore on the parameters $ H_\Ti$, $h_\Ti$, and $M_\Ti$. Of course, the estimates are not necessarily sharp and the time $\frac{1}{3}\beta^2$ appearing in \eqref{ultra2} is hence a bit arbitrary.  Nevertheless, it gives a hint how things change in dependence of the tiling.\\
In particular, one can see that the ``transition time'' grows quadratically if we dilate the tiling linearly.

As the Nash inequalities remain valid for forms $Q_b(f,g):=Q(f,g)+b(f_{|V},g_{|V})$, where $f_{|V}$ denotes the restriction of $f$ to the vertex set and $b(\cdot,\cdot)$ be a bounded, symmetric  Dirichlet form on $l^2(V)$, all the above conclusions also hold for the semigroups generated by the operators associated with these forms, and especially for Robin type vertex conditions $b_v f(v)+\sum_{e \sim v}\frac{\partial f_{|e}}{\partial n}(v)=0$ with $0 \leq b_v \leq M < \infty$ for all $v\in V$. Furthermore, one can substitute $\int_{G} f'(x)g'(x) d\mu(x)$ by $\int_{G} \alpha(x) f'(x)g'(x) d\mu(x)$ with $\alpha(\cdot) \in L^\infty(G)$ and $\alpha(x) \geq \alpha >0$.

\end{document}